\newtheorem{Thm}{Theorem}[section]
\newtheorem{corollary}[Thm]{Corollary}
\newtheorem{definition}[Thm]{Definition}
\newtheorem{remark}[Thm]{Remark}
\newtheorem{example}[Thm]{Example}
\newtheorem{theorem}[Thm]{Theorem}
\newcommand{\bitem}{\begin{itemize}}
\newcommand{\eitem}{\end{itemize}}
\newcommand{\benum}{\begin{enumerate}}
\newcommand{\eenum}{\end{enumerate}}
\newcommand{\beq}{\begin{equation}}
\newcommand{\eeq}{\end{equation}}
\newcommand{\absip}[2]{| \langle#1,#2\rangle |}
\newcommand{\norm}[1]{\|#1\|}
\newcommand{\Id}{\mbox{\rm Id}}
\def\NN{\mathbb{N}}
\def\ZZ{\mathbb{Z}}
\def\RR{\mathbb{R}}
\def\ZZ{\mathbb{Z}}
\def\cR{{\mathcal{R}}}
\def\cH{{\mathcal{H}}}
\def\SSn{\mathbb S}
\def\cHn{\mathcal H}
\def\cH{\mathcal{H}}
\newcommand{\gk}[1]{{\color{black}{#1}}}
\begin{document}

\title{A quantitative notion of redundancy for infinite frames}

\author[J. Cahill]{Jameson Cahill}
\address{Department of Mathematics, University of Missouri, Columbia, MO 65211-4100, USA}
\email{jameson.cahill@gmail.com}

\author[P. G. Casazza]{Peter G. Casazza}
\address{Department of Mathematics, University of Missouri, Columbia, MO 65211-4100, USA}
\email{pete@math.missouri.edu}

\author[A. Heinecke]{Andreas Heinecke}
\address{Department of Mathematics, University of Missouri, Columbia, MO 65211-4100, USA}
\email{ah343@mail.missouri.edu}

\thanks{The authors were supported by NSF DMS 0704216 and 1008183.}

\begin{abstract}
Bodmann, Casazza and Kutyniok introduced a quantitative notion
of redundancy for finite frames - which they called
{\em upper and lower redundancies} - that match better with
an intuitive understanding of redundancy for finite frames \gk{ in a Hilbert
space}.
 The objective of this paper is to see how much of this
 theory generalizes to infinite frames.
  \end{abstract}

\keywords{Frames, Linearly Independent Sets, Redundancy,
Spanning Sets.}

\subjclass{Primary: 94A12; Secondary: 42C15, 15A04, 68P30}

\maketitle

\section{Introduction}

The customary notion of redundancy for a finite frame $\{\phi_i\}_{i=1}^N$ in $\cH_n$ is
to use $\frac{N}{n}$.  Many people have felt for a long time that this was not really
satisfactory since it assigns {\em redundancy 2} to each of the following frames (where
$\{e_i\}_{i=1}^n$ is an orthonormal basis for $\cH_n$):

\[ \Phi_{1} = \{e_1,e_1,e_2,e_2,\ldots,e_n,e_n\};\]
\[ \Phi_2 = \{e_1,\ldots,e_1,e_2,e_3,\dots,e_n\},\ \ \mbox{where $e_1$ occurs $(n+1)$-times,}\]
\[ \Phi_3 = \{e_1,0,e_2,0,\ldots,e_n,0\}.\]

The frame $\Phi_1$ has redundancy 2 and is a disjoint union of two spanning sets and
a disjoint union of two linearly independent sets.  This description of redundancy is
informative.  But for $\Phi_2$, the frame is heavily concentrated in one dimension of the
space.  In particular, this frame is made up of just one spanning set and it requires
(n+1)-linearly independent sets to represent it.  Finally, the frame $\Phi_3$ is made
up of one orthonormal basis and a collection of zero vectors.  Assigning this frame
redundancy 2 is quite misleading.  Although zero vectors are important in some
areas of frame theory, such as filter bank theory, counting them in redundancy gives
no useful information.  What is important, is to keep track of the number of zero vectors
while not letting them artificially increase redundancy.
\medskip

In this paper, we generalize the results of \cite{BCK} by applying their quantitative
notion of redundancy for finite frames \gk{in a Hilbert space $\cH$},  to infinite frames.
Most of the results carry over easily, but a few fail in this setting .

Concerning infinite-dimensional Hilbert spaces, 
much work has been done on the idea of {\em deficits, excesses
and redundancy} \cite{BCHL03, BCHL03a,BCHL06,BCHL06a,BCHL06b, BCL09}.
In \cite{BL07}, the authors provide
a meaningful quantitative notion of redundancy which applies to general
infinite frames. In their work, redundancy
is defined as the reciprocal of a so-called frame measure function, which is a
function of certain averages of inner products of frame elements with their
corresponding dual frame elements.  More recently, in \cite{BCL09}, it is shown that
$\ell_1$-localized frames satisfy
several properties intuitively linked to redundancy such as that any frame with
redundancy \gk{greater} than one should contain in it a frame with redundancy arbitrarily
close to one, the redundancy of any frame for the whole space should be greater than
or equal to one, and that the redundancy of a Riesz basis should be exactly one,
were proven for this notion.  Our approach is slightly different in that we are interested  
in how many spanning sets or linearly independent sets are in the frame.  However,
our notion does not capture much information about infinite frames whose frame
vectors are not bounded.  We will give examples to show the problems with 
our notion of redundancy for unbounded frames.

\subsection{Review of Frames}

We start by fixing our terminology while briefly reviewing the basic definitions
related to frames. \gk{Let} $\cHn$ denote an $n$-dimensional
real or complex Hilbert space and $\cH$ denotes
a finite or infinite dimensional Hilbert space.  A {\em frame} for a Hilbert
space $\cH$ is a family of vectors $\{\phi_i\}_{i\in I}$
(with $|I|$ finite or infinite)  for which there exists constants $0 < A \le B < \infty$
such that
\[
A\norm{x}^2 \leq \sum_{i\in I} |\langle x, \varphi_i \rangle |^2 \leq B\norm{x}^2
\quad \mbox{for all } x \in \cHn.
\]
When $A$ is chosen as the largest possible value and $B$ as the smallest
for these inequalities to hold,
then we call them the {\em (optimal) frame constants}.
If $A$ and $B$ can be chosen as $A=B$, then the frame is called {\em $A$-tight}, and if
$A=B=1$ is possible, $\Phi$ is a {\em Parseval frame}. A frame is called
{\em equal-norm}, if there exists some $c>0$ such that $\|\varphi_i\|=c$ for all
$i=1,\ldots,N$, and it is {\em unit-norm} if $c=1$.

Apart from providing redundant expansions, frames can also serve as an analysis tool.
In fact, they allow the analysis of data by studying the associated {\em frame coefficients}
$(\langle x, \varphi_i \rangle)_{i\in I}$, where the operator $T_\Phi$
defined by
$T_\Phi: \cHn \to \ell_2I)$, $x \mapsto (\langle x,\varphi_i\rangle)_{i\in I}$
is called the \emph{analysis operator}. The adjoint $T^*_\Phi$ of the analysis operator is typically
referred to as the {\em synthesis operator} and satisfies $T^*_\Phi((c_i)_{i\in I}) = \sum_{i\in I}
c_i\varphi_i$.
The main operator associated with a frame, which provides a stable reconstruction process, is the
{\em frame operator}
\[
S_\Phi=T^*_\Phi T_\Phi : \cHn \to \cHn, \quad x \mapsto \sum_{i\in I} \langle x,\varphi_i\rangle \varphi_i,
\]
a positive, self-adjoint invertible operator on $\cHn$. In the case of a Parseval frame, we have $S_\Phi=\Id_{\cHn}$.
In general, $S_\Phi$ allows reconstruction of a signal $x \in \cHn$ through the
reconstruction formula
\beq \label{eq:expansion}
x = \sum_{i\in I}\langle x,S_\Phi^{-1} \varphi_i\rangle \varphi_i.
\eeq
The sequence $(S_\Phi^{-1} \varphi_i)_{i=1}^N$ which can be shown to form a frame itself, is often
referred to as the {\em canonical dual frame}.

We note that the choice of coefficients in the expansion \eqref{eq:expansion}
is generally not the only possible one. If the frame is linearly dependent --
which is typical in applications --
then there exist infinitely many choices of coefficients $(c_i)_{i=1}^N$ leading to expansions of
$x \in \cH$ by
\beq \label{eq:sparseexpansion}
x = \sum_{i\in I} c_i \varphi_i.
\eeq
This fact, for instance, ensures resilience to erasures and noise. The particular choice of coefficients
displayed in \eqref{eq:expansion} is the smallest in $\ell_2$ norm \cite{Chr03}, hence contains the least energy.
A different paradigm has recently received rapidly increasing attention \cite{BDE09}, namely to choose
the coefficient sequence to be sparse in the sense of having only few non-zero entries, thereby allowing
data compression while preserving perfect recoverability.

%Lately, the possibility inherent in the notion of redundancy to determine a {\em sparse expansion} of
%$x$ -- choosing the sequence $(c_i)_{i=1}^N$ to have only few non-zero entries -- has recently received
%rapidly increasing attention.
%Related with this goal is the notion of the {\em spark} of a matrix -- the
%smallest number of linearly dependent columns -- which was first introduced in \cite{DE03} as a measure
%of linear dependence. It was shown in \cite{DE03} that provided $(c_i)_{i=1}^N$ has less than
%$\spark(T_\Phi^*)/2$ nonzero components, the sparsest of all representations \eqref{eq:sparseexpansion}
%is necessarily unique. With slightly stronger conditions on $(c_i)_{i=1}^N$, the coefficients of this
%sparsest expansion can be determined by solving
%\[
%\min_{(c_i)_{i=1}^N} \norm{(c_i)_{i=1}^N}_1 \mbox{ such that } x = \sum_{i=1}^N c_i \varphi_i,
%\]
%see, for instance, also \cite{DE03}.

For a more extensive introduction to frame theory, we refer the interested reader to the books
\cite{Dau92,Mal98,Chr03} as well as to the survey papers \cite{KC07a,KC07b}.
%For a survey on sparsity, we refer to \cite{BDE09}.

\subsection{\gk{An Intuition-Driven Approach to Redundancy}}
\label{subsec:intuition}

 In order to properly define a quantitative notion
of redundancy, the authors of \cite{BCK} first gave a list of desiderata that a notion is required
to satisfy.

\subsection{Desiderata}
\label{subsec:desiderata}

Summarizing and analyzing the requirements we have discussed, we state the following list of
desired properties for an upper redundancy  $\cR^+_\Phi$ and a lower redundancy $\cR^-_\Phi$
of a frame $\Phi = (\varphi_i)_{i\in I}$ for a finite or infinite dimensional
real or complex Hilbert space $\cH$.

\renewcommand{\labelenumi}{{\rm [D\arabic{enumi}]}}

\begin{enumerate}\item\label{D0}{\em Zero Vectors.}  Redundancy should not count
zero vectors.
\item\label{DEE} {\em Generalization.} If $\Phi$ is an equal-norm Parseval frame, then
in this special case the customary notion of redundancy shall be attained, i.e., $\cR^-_\Phi = \cR^+_\Phi $.
\item\label{DNyquist} {\em Nyquist Property.} The condition $\cR^-_\Phi = \cR^+_\Phi$ shall characterize
tightness of a normalized version of $\Phi$, thereby supporting the intuition that upper and lower
redundancy being different implies `non-uniformity' of the frame. In particular, $\cR^-_\Phi =
\cR^+_\Phi = 1$ shall be equivalent to orthogonality as the `limit-case'.
\item\label{Duplow} {\em Upper and Lower Redundancy.} Upper and lower redundancy shall be
`naturally' related by $0 < \cR^-_\Phi \le \cR^+_\Phi < \infty$.
\item\label{Dadditiv} {\em Additivity.} Upper and lower redundancy shall be subadditive and superadditive,
respectively, with respect to unions of frames. They shall be additive provided that the redundancy
is uniform, i.e., $\cR^-_\Phi=\cR^+_\Phi$.
\item\label{DInvariance} {\em Invariance.} Redundancy shall be invariant under the action of a unitary
operator on the frame vectors, under scaling of single frame vectors, as well as under permutation,
since intuitively all these actions should have no effect on, for instance, robustness against erasures,
\gk{which is one property redundancy shall intuitively measure.}
\item\label{Daverob} {\em Spanning Sets.} The lower redundancy shall measure the maximal number of
spanning sets of which the frame consists. This immediately implies that the lower
redundancy is a measure for robustness of the frame against erasures in the sense that any set of
a particular number of vectors can be deleted yet leave a frame.
\item\label{Dmaxrob} {\em Linearly Independent Sets.} The upper redundancy shall measure the minimal
number of linearly independent sets of which the frame consists.
%Notice that this is an upper bound for the spark of a
%frame, which is the minimal number of elements forming a linearly dependent subset of the frame.
\end{enumerate}

It is straightforward to verify that for the special type of frames consisting of orthonormal basis vectors, each
repeated a certain number of times, the upper and lower redundancies given by the maximal or minimal
number of repetitions satisfy these conditions. The challenge is now to extend this definition to all
frames in such a way that as many of these properties as possible are preserved.

\renewcommand{\labelenumi}{{\rm (\roman{enumi})}}

%************************************************************************************

\section{Defining Redundancy}

\subsection{Definitions}

As explained before, we first introduce a local redundancy given in \cite{BCK}, which
encodes the concentration of frame vectors around one point. Since the
norms of the frame vectors do not matter for concentration, we normalize
the given frame and also consider only points on the unit sphere $\SSn=\{x \in \cHn: \|x\|=1\}$
in $\cH$. Hence another way to view local redundancy is by considering
it as some sort of density function on the sphere.  A consequence of normalizing the
frame vectors, is that our new set may no longer be Bessel.

We now define a notion of local redundancy. \gk{For this, we remark that} throughout the paper, we
let \gk{$\langle y \rangle$ denote the span of some $y \in \cHn$ and $P_{\langle y \rangle}$
the orthogonal projection onto $\langle y \rangle$.}

\begin{definition}
Let $\Phi = (\varphi_i)_{i\in I}$ be a frame for $\cH$. For
each $x \in \SSn$, the {\em redundancy function}
$\cR_\Phi : \SSn \to \RR^+$ is defined by
\[
\cR_\Phi(x) = \sum_{i\in I} \|P_{\langle \varphi_i \rangle} (x)\|^2
%= \sum_{i=1}^N|\langle x,\frac{\varphi_i}{\|\varphi_i\|}\rangle|^2
.
\]
\end{definition}

We might think about the function $\mathcal R_\Phi$ as a redundancy pattern on the sphere,
which measures redundancy at each single point. Also notice that this notion is reminiscent
of the fusion frame condition \cite{CKL08}, here for rank-one projections.

In contrast to \cite{BCK}, this redundancy function may not assume its maximum or
minimum on the unit sphere and in general both the max and min of this function
could be infinite.

%Let $\Phi = (\varphi_i)_{i=1}^N$ be a frame in $\cHn$.
%Then the following conditions hold.
%\bitem
%\item[{\rm (i)}] The function $x \mapsto \cR_\Phi(x)$, $\SS^n \to \RR^+_0$ is continuous.
%\item[{\rm (ii)}] The values $\max_{x \in \SS^n} \cR_\Phi(x)$ and $\min_{x \in \SS^n} \cR_\Phi(x)$
%do exist.
%\eitem
We now define:

\begin{definition} \label{def:upplowred}
Let $\Phi = (\varphi_i)_{i\in I}$ be a frame for  $\cH$.
Then the {\em upper redundancy of $\Phi$} is defined by
\[
\cR^+_\Phi = \sup_{x \in \SSn} \cR_\Phi(x)
\]
and the {\em lower redundancy of $\Phi$} by
\[
\cR^-_\Phi = \inf_{x \in \SSn} \cR_\Phi(x).
\]
Moreover, $\Phi$ has a {\em uniform redundancy}, if
\[
\cR^-_\Phi = \cR^+_\Phi.
\]
\end{definition}

This notion of redundancy hence equals the upper and lower frame bound of the
normalized version of the frame - which could now be infinite. 

%****************************************************************

\section{The Case of Infinite Redundancy}
 
\subsection{Main Result}

With the previously defined quantitative notion of upper and lower redundancy, we can
now verify the  properties from Subsection \ref{subsec:desiderata} which hold (and those
which do not hold) in the infinite dimensional setting.
\begin{theorem}
\label{T1}
Let $\Phi = (\varphi_i)_{i\in I}$ be a frame for an $\infty$-dimensional real or complex Hilbert space   $\cHn$ and assume that $\cR^+< \infty$.
\begin{enumerate}
\item[{\rm [D1]}] {\em Generalization.} If $\Phi$ is an equal-norm Parseval frame, then
\[
\cR^-_\Phi = \cR^+_\Phi.
\]
\item[{\rm [D2]}] {\em Nyquist Property.} The following conditions are equivalent:
\bitem
\item[{\rm (i)}] We have $\cR^-_\Phi = \cR^+_\Phi$.
\item[{\rm (ii)}] The normalized version of $\Phi$ is tight.
\eitem
Also the following conditions are equivalent.
\bitem
\item[{\rm (i')}] We have $\cR^-_\Phi = \cR^+_\Phi = 1$.
\item[{\rm (ii')}] $\Phi$ is orthogonal.
\eitem
\item[{\rm [D3]}] {\em Upper and Lower Redundancy.} We have
\[
0 < \cR^-_\Phi \le \cR^+_\Phi < \infty.
\]
\item[{\rm [D4]}] {\em Additivity.} For each orthonormal basis $(e_i)_{i=1}^n$,
\[
\cR^\pm_{\Phi\cup(e_i)_{i=1}^n} = \cR^\pm_\Phi + 1.
\]
Moreover, for each frame $\Phi'$ in $\cHn$,
\[
\cR^-_{\Phi\cup\Phi'} \ge \cR^-_\Phi + \cR^-_{\Phi'}
\quad \mbox{and} \quad
\cR^+_{\Phi\cup\Phi'} \le \cR^+_\Phi + \cR^+_{\Phi'}.
\]
In particular, if $\Phi$ and $\Phi'$ have uniform redundancy, then
\[
\cR^-_{\Phi\cup\Phi'} = \cR_\Phi + \cR_{\Phi'} = \cR^+_{\Phi\cup\Phi'}.
\]
\item[{\rm [D5]}] {\em Invariance.}
Redundancy is invariant under application of a unitary operator $U$ on $\cHn$, i.e.,
\[
\cR^\pm_{U(\Phi)} = \cR^\pm_{\Phi},
\]
under scaling of the frame vectors, i.e.,
\[
\cR^\pm_{(c_i \varphi_i)_{i=1}^N} = \cR^\pm_{\Phi}, \quad c_i \mbox{ scalars},
\]
and under permutations, i.e.,
\[
\cR^\pm_{(\varphi_{\pi(i)})_{i=1}^N} = \cR^\pm_{\Phi}, \quad \pi \in S_{\{1,\ldots,N\}},
\]
\item[{\rm [D6]}] {\em Spanning Sets.} In the finite setting, $\Phi$ contains
$\lfloor \cR^-_\Phi\rfloor$ disjoint spanning sets.
In the infinite dimensional setting, this property fails as we will show with
an example.
\item[{\rm [D7]}] {\em Linearly Independent Sets.} 
$\Phi$ 
can be partitioned into $\lceil \cR^+_\Phi\rceil$ linearly independent sets.
%In particular, we have $\spark(T_\Phi^*) \le \lceil \cR^+_\Phi\rceil$.
\end{enumerate}
\end{theorem}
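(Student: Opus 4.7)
The plan is to reduce the assertion to the classical Rado--Horn partition theorem via a density-type counting inequality on finite subfamilies, and then stitch the resulting finite partitions into a global one using a compactness argument.

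First I would establish the key counting inequality: setting $k=\lceil \cR^+_\Phi\rceil$, every finite subset $F\subseteq I$ with $\varphi_i\ne 0$ for all $i\in F$ satisfies $|F|\le k\,\dim\spann\{\varphi_i:i\in F\}$. To see this, let $V=\spann\{\varphi_i:i\in F\}$ and $d=\dim V$, pick an orthonormal basis $\{e_1,\dots,e_d\}$ of $V$, and apply the definition of the redundancy function at each $e_\ell$ to get $\sum_{i\in F}|\langle e_\ell,\varphi_i/\|\varphi_i\|\rangle|^2\le \cR^+_\Phi$. Summing over $\ell$ and using that every unit vector $\varphi_i/\|\varphi_i\|$ lies in $V$, the double sum collapses to $|F|$, giving $|F|\le d\,\cR^+_\Phi\le dk$. (Zero vectors cannot lie in a linearly independent set, but they contribute nothing to $\cR_\Phi$ and may be slotted into any part at the end in view of [D0].)

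Next I would invoke the finite Rado--Horn partition theorem, which asserts that a finite family of vectors can be partitioned into $k$ linearly independent sets precisely when the inequality above holds for every subfamily. Since our bound holds uniformly over all finite $F$, it holds for every subfamily of any given $F$, so Rado--Horn yields a partition of each such $F$ into $k$ linearly independent sets. For infinite $I$ I would then pass from these local partitions to a global one via compactness. Encode any partition as a coloring $c:I\to\{1,\dots,k\}$, and equip $\{1,\dots,k\}^I$ with the product topology, which is compact by Tychonoff. For each finite $F\subseteq I$, let
\[
K_F=\bigl\{c\in\{1,\dots,k\}^I:\{\varphi_i:i\in F,\ c(i)=j\}\ \text{is linearly independent for every } j\bigr\}.
\]
Because this condition depends only on $c|_F$, the set $K_F$ is clopen, and by Rado--Horn applied to $F$ it is nonempty. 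The family $\{K_F\}_{F\text{ finite}}$ has the finite intersection property, since $K_{F_1\cup\cdots\cup F_n}\subseteq K_{F_1}\cap\cdots\cap K_{F_n}$. Compactness then yields a coloring in $\bigcap_F K_F$, whose color classes supply the desired partition of $\Phi$ into $k$ linearly independent sets.

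The main obstacle is precisely this last step: once Rado--Horn has handled every finite snapshot, one must thread these snapshots together coherently across a possibly uncountable $I$. Tychonoff compactness on $\{1,\dots,k\}^I$ is the cleanest vehicle for this; a K\"onig-type diagonalization would suffice if $I$ were countable. A minor secondary issue is the bookkeeping of zero vectors, which is cosmetic and handled by appending them at the very end.
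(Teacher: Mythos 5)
Your argument addresses only item [D7]; the theorem as stated has seven parts, and you say nothing about [D1]--[D6]. In the paper these are dispatched in one line each ([D1]--[D3] and [D5] follow from the observation that $\cR^{\pm}_\Phi$ are precisely the optimal frame bounds of the normalized frame, [D4] carries over from the finite-dimensional argument of Bodmann--Casazza--Kutyniok, and [D6] is the item that \emph{fails} and is only treated by a counterexample later in the paper), so the omission is not mathematically deep, but a complete treatment of the statement would have to account for them.

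For [D7] itself your proof is correct, and it is more self-contained than the paper's, which simply cites Theorem 4.2 of Casazza--Christensen--Lindner--Vershynin: every Bessel sequence with Bessel bound $B$ and $\|\varphi_i\|\ge c$ decomposes into $\lceil B/c^2\rceil$ linearly independent sets; applied to the normalized frame (so $c=1$ and $B=\cR^+_\Phi$) this is exactly [D7]. What you have written out --- the counting inequality $|F|\le \lceil\cR^+_\Phi\rceil\,\dim\spann\{\varphi_i:i\in F\}$ obtained by testing $\cR_\Phi$ against an orthonormal basis of the span, then finite Rado--Horn, then Tychonoff compactness with the finite intersection property to glue the finite colorings --- is essentially the standard proof of that cited theorem, so you have opened the black box rather than found a new route. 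The counting step is sound (the double sum collapses to $|F|$ because each normalized $\varphi_i$ is a unit vector of $V$, so its projection onto $V$ has norm one), $K_F$ is clopen since membership depends only on the finitely many coordinates in $F$, and linear independence is a finitary property, so the color classes of the limiting coloring are genuinely linearly independent. The one point to make explicit in a final write-up is the treatment of zero vectors: a part containing $0$ is never linearly independent, so the partition really applies to $\Phi$ with its zero vectors deleted; the paper has the identical issue (the cited theorem assumes $\|\varphi_i\|\ge c>0$) and resolves it by the convention [D0], so your parenthetical is acceptable but should be promoted to an explicit remark rather than left as an aside.
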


\begin{proof}
The properties [D1] is true because redundancy is
the upper and lower frame bounds of the normalized version
of the frame.

The first part of [D2] is true by definition and for the
second part of [D3], it is well known that a unit norm Parseval frame must
be an orthonormal basis.

Property [D4] follows easily from the argument in \cite{BCK}.

Property [D5] is ovbious.

Property [D6] fails as we will see in the next section.

[D7] follows from Theorem 4.2 of \cite{CKLV} which states that:  Every Bessel sequence
$\{\varphi_i\}_{i\in I}$ with Bessel bound $B$ and $\|\varphi_i\|\ge c$ for all $i\in I$ 
(in our case $c=1$), can be decomposed into $\lceil B/c^2\rceil$ linearly independent
sets.
\end{proof}

The redundancy function gives little information near the extreme
cases - as was true in the finite dimensional case, as the following example shows.

\begin{example}
\label{example:F3}
{\rm We add a example in which the frame is not merely composed of vectors from the unit basis
$\{e_1, \ldots, e_n\}$. Letting $0 < \varepsilon < 1$, we choose $\Phi_4 = (\varphi_i)_{i\in I}$
as
\[
\varphi_i = \left\{\begin{array}{rcl}
e_1 & : & i=1,\\
\sqrt{1-\varepsilon^2} e_1 + \varepsilon e_i & : & i \neq 1\\
e_i&:& i>N.
\end{array} \right.
\]
This frame is strongly concentrated around the vector $e_1$. We first observe that
\[
\cR_{\Phi_3}(e_1) = \sum_{i=1}^N \|P_{\langle \varphi_i \rangle}(e_1)\|^2
= 1 + \sum_{i=2}^N \absip{e_1}{\sqrt{1-\varepsilon^2} e_1 + \varepsilon e_i}^2
= 1 + (N-1)(1-\varepsilon^2).
\]
However, this is not the maximum, which is in fact attained at the average point of the
frame vectors. But in order to avoid clouding the intuition by technical details,
we omit this analysis, and observe that
\[
1 + (N-1)(1-\varepsilon^2) \le \cR_{\Phi_4}^+ < N.
\]
Since
\[
\cR_{\Phi_4}(e_2) = \sum_{i=1}^N \|P_{\langle \varphi_i \rangle}(e_2)\|^2
= \sum_{i=2}^N \absip{e_2}{\sqrt{1-\varepsilon^2} e_1 + \varepsilon e_i}^2
= \varepsilon^2,
\]
we can conclude similarly, that
\[
0 < \cR_{\Phi_4}^- \le \varepsilon^2.
\]}
\end{example}
The frame $\Phi_3$ shows that the new redundancy notion gives little information
near the extreme cases:  $\cR^- \approx 0$ and $\cR^+$ large,
but becomes increasingly more accurate as $\cR^-$ and $\cR^+$ become closer to one another.
By [D2], the frame $\Phi_4$ is not orthogonal, nor is it tight. [D6] is not applicable
for this frame, since $\lfloor \cR_{\Phi_4}^- \rfloor = 0$ although there does exist a
partition into one spanning set.  Now, [D7] implies that this frame can be partitioned into $N-1$ linearly
independent sets. Again, we see that we can do better than this by merely taking the
whole frame which happens to be linearly independent. As before, we observe that [D7]
is not sharp for large values of $\cR^+$.  However, these become increasingly accurate
as $\cR^-$ and $\cR^+$ approach each other.

%************************************************************************************

%***********************************************************************

\section{Infinite Equal Norm Parseval Frames}

It is easy to construct infinite equal norm
Parseval frames for which the norms of the vectors are aribitrarily
close to one.
\vskip12pt

\begin{theorem}
For any $r\le 1$, there is an equal norm Parseval frame $\{\varphi_i\}_{i=1}^{\infty}$
for $\ell_2$ with $\|\varphi_i\|^2 = r$, for all $i=1,2,\ldots$.
\end{theorem}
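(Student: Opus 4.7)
The plan is to realize the desired equal-norm Parseval frame by Fourier analysis, exploiting the fact that multiplying an exponential character $e^{2\pi i k\theta}$ by an indicator function $\chi_S$ produces a function whose $L^2$-norm depends only on $|S|$, not on $k$. Since the isomorphism class of separable infinite-dimensional Hilbert spaces is unique, any such frame constructed in an auxiliary model of $\ell_2$ is automatically a frame for $\ell_2$ itself.

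Concretely, I would identify $\ell_2$ with $\ell_2(\ZZ)$ and then with $L^2(\TT)$ (normalized Haar measure) via the Fourier transform, under which the standard basis $\{e_k\}_{k\in\ZZ}$ corresponds to the characters $\chi_k(\theta)=e^{2\pi i k\theta}$. Given $r\in(0,1]$, pick any measurable $S\subset\TT$ with $|S|=r$ (for instance $S=[0,r]$) and let
\[
V=\{f\in L^2(\TT):f=0\text{ a.e. on }\TT\setminus S\},
\]
a closed, infinite-dimensional, separable subspace, hence unitarily isomorphic to $\ell_2$. Define $\varphi_k=\chi_S\chi_k\in V$ for $k\in\ZZ$ (and reindex by $\NN$ via any bijection).

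The two properties to verify are then immediate. A direct computation gives
\[
\|\varphi_k\|^2=\int_\TT|\chi_S(\theta)e^{2\pi i k\theta}|^2\,d\theta=|S|=r,
\]
so the frame is equal-norm with the prescribed norm. For the Parseval identity, observe that for $f\in V$ we have $\chi_S f=f$, so Plancherel for Fourier series yields
\[
\sum_{k\in\ZZ}|\langle f,\varphi_k\rangle|^2
=\sum_{k\in\ZZ}|\langle \chi_S f,\chi_k\rangle|^2
=\sum_{k\in\ZZ}|\langle f,\chi_k\rangle|^2
=\|f\|_{L^2(\TT)}^2.
\]

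There is no real obstacle to overcome; the whole argument rests on the identity $\|\chi_S\chi_k\|^2=|S|$ being independent of $k$. The only point worth being explicit about is that transporting $\{\varphi_k\}$ along a unitary $V\cong\ell_2$ preserves both the equal-norm property and the Parseval identity, so the resulting sequence is genuinely an equal-norm Parseval frame for $\ell_2$ with $\|\varphi_i\|^2=r$ for every $i$, as required.
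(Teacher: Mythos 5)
Your construction is exactly the one the paper uses: restrict the Fourier characters $e^{2\pi int}$ to a measurable set of measure $r$ and observe that the resulting family is a Parseval frame of norm-$r^{1/2}$ vectors for the corresponding (separable, infinite-dimensional) subspace. The paper states this in one line; you supply the routine verifications, but the idea and the details coincide.
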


\begin{proof}
Given the orthonormal basis $\{e^{2\pi int}\}_{n\in \ZZ}$, let $E\subset [0,1]$ be
a measurable set for which $|E|=r$.  Then
\[ \left \{ e^{2\pi int}\chi_E \right \}_{n\in \ZZ}\]
is a Parseval frame of norm $r$ vectors.
\end{proof}

%***********************************************************************

\section{Infinite Parseval Frames}

Since {\em linear independence} is so weak in the infinite dimensional setting,
we will now see that (equal norm Parseval) frames can have some surprising
properties.  This will affect our work in this area.

\begin{example}\label{E1}
For every natural number $k\in \NN$
there is an equal norm Parseval frame for $\ell_2$ which can be written as
$j$-linearly independent and disjoint spanning sets, for all $j=1,2,\ldots,k$.
\end{example}

\begin{proof}
It is straightforward to choose families of vectors $\{f_{ij}\}_{i,j=1}^{\infty}$ satisfying:
\vskip12pt
(1)  The vectors $\{f_{ij}\}_{i,j=1}^{\infty}$ are linearly independent. 
\vskip12pt
(2)   For each $j=1,2,\ldots$, we have that span $\{f_{ij}\}_{i=1}^{\infty}$ is dense in $\ell_2$.  
\vskip12pt

It follows that if we apply Grahm-Schmidt to $\{f_{ij}\}_{i=1}^{\infty}$ 
for each $j=1,2,\ldots$, we get a sequence of orthonormal basis 
$\{g_{ij}\}_{i=1}^{\infty}$ for $\ell_2$, with the property that $\{g_{ij}\}_{i,j=1}^{\infty}$
is a linearly independent set.
Fix $k\in \NN$ and consider the family:
\[ \left \{ \frac{1}{\sqrt{k}}f_{ij}\right \}_{i=1,j=1}^{\  \infty,\ k}.\]
This family clearly has the desired properties.
\end{proof}

\begin{example}
There is a Parseval frame for $\ell_2$ which can be written as
$j$-linearly independent and disjoint spanning sets, for all $j=1,2,\ldots,\infty$.
(Note that $j=\infty$ is included here).
\end{example}

\begin{proof}
We use the family $\{g_{ij}\}_{i,j=1}^{\infty}$ from Example \ref{E1} and form
the Parseval frame
\[ \left \{ \frac{1}{2^j}g_{ij} \right \}_{i,j=1}^{\infty}.\]

This is the required family.
\end{proof}

\section{More on the Infinite Version of Property [D6]}

In this section we will further examine property [D6].  Unfortunately, it is 
dangerously
close to Kadison-Singer.  First, we give an alternative proof of Corollary 2.4
of \cite{BCPS09}.  

\begin{definition}
A family of vectors $\{\varphi_i\}_{i=1}^{\infty}$ is $\omega$-{\bf independent}
if whenever 
\[ \sum_{i=1}^{\infty}a_i\varphi_i =0,\]
it follows that $a_i=0$, for all $i=1,2,\ldots$.  If we have this property only for
all $\{a_i\}_{i=1}^{\infty}\in \ell_2$, we say the family of vectors is $\ell_2$-{\bf independent}.
\end{definition}

\begin{theorem}
Let $\{Pe_i\}_{i=1}^{\infty}$ be a Parseval frame in $\cH$.  If $I\subset \NN$, the following
are equivalent:

1.    The family $\{Pe_i\}_{i\in I}$ spans $P(\cH)$.

2.  The family $\{(I-P)e_i\}_{i\in I^c}$ is $\ell_2$-independent. 
\end{theorem}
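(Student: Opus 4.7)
The plan is to prove the contrapositive equivalence using the Naimark picture that is implicit in the statement: one should think of the ambient Hilbert space as $\ell_2$ with orthonormal basis $\{e_i\}_{i=1}^\infty$, and of $P$ as an orthogonal projection onto $\cH$ (identified with $P(\ell_2)$). Writing $Q = I - P$, so that $e_i = Pe_i + Qe_i$ is an orthogonal decomposition for every $i$, the key bookkeeping identities are $\langle z, Pe_i \rangle = \langle z, e_i \rangle$ for all $z \in P(\cH)$, and $\sum a_i e_i \in P(\cH)$ if and only if $\sum a_i Qe_i = 0$.

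For the direction $\neg (1) \Rightarrow \neg(2)$, suppose $\{Pe_i\}_{i\in I}$ does not span $P(\cH)$. Then there is a nonzero $x \in P(\cH)$ with $\langle x, Pe_i\rangle = 0$ for every $i \in I$. The identity $\langle x, Pe_i \rangle = \langle x, e_i\rangle$ forces the Fourier coefficients $a_i := \langle x, e_i\rangle$ to vanish on $I$, so the expansion of $x$ with respect to $\{e_i\}$ reads $x = \sum_{i \in I^c} a_i e_i$ with $(a_i)_{i \in I^c} \in \ell_2$ and not identically zero. Applying $Q$ and using $Qx = 0$ yields $\sum_{i \in I^c} a_i Qe_i = 0$, violating $\ell_2$-independence.

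For the reverse direction $\neg(2) \Rightarrow \neg(1)$, suppose there exist scalars $(a_i)_{i \in I^c} \in \ell_2$, not all zero, with $\sum_{i \in I^c} a_i Qe_i = 0$. Set $z := \sum_{i \in I^c} a_i e_i$, which is a nonzero vector in the ambient $\ell_2$ by orthonormality of $\{e_i\}$. The hypothesis says $Qz = 0$, hence $z = Pz \in P(\cH)$. For any $i \in I$ we compute
\[
\langle z, Pe_i\rangle = \langle Pz, e_i\rangle = \langle z, e_i\rangle = 0,
\]
since $z$ has no component along $e_i$ for $i \in I$. Thus $z$ is a nonzero vector of $P(\cH)$ orthogonal to $\{Pe_i\}_{i \in I}$, so this subfamily fails to span $P(\cH)$.

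The argument is largely formal once one chooses to work inside the Naimark dilation; the only subtlety is keeping track of whether a given equation takes place in $P(\cH)$ or in the ambient $\ell_2$, and in particular recognizing that the $\ell_2$-summability of $(a_i)$ is automatic in both directions (coming from $x \in \ell_2$ in the first, and assumed in the second). I do not foresee a serious obstacle; the main thing to state carefully is the passage between $\{e_i\}$-coefficients of a vector in $P(\cH)$ and its inner products with the frame vectors $\{Pe_i\}$.
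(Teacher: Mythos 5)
Your proof is correct and is essentially identical to the paper's: both directions are argued by contrapositive, using the decomposition $e_i = Pe_i + (I-P)e_i$, the identity $\langle x, Pe_i\rangle = \langle x, e_i\rangle$ for $x \in P(\cH)$, and the correspondence between a nonzero vector of $P(\cH)$ orthogonal to $\{Pe_i\}_{i\in I}$ and a nontrivial $\ell_2$-relation among $\{(I-P)e_i\}_{i\in I^c}$. If anything, your write-up is slightly more careful than the paper's about nonvanishing and $\ell_2$-summability of the coefficient sequence.
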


\begin{proof}
\noindent $(1)\Rightarrow (2)$:  
Assume that $\{(I-P)e_i\}_{i\in I^c}$ is not $\ell_2$-independent.  Then there exists
scalars $\{b_i\}_{i\in I^c}\in \ell_2$ so that
\[ \sum_{i\in I^c}b_i(I-P)e_i =0.\]
It follows that
\[ f = \sum_{i\in I^c}b_ie_i = \sum_{i\in I^c}b_iPe_i \in P(\cH).\]
Thus,
\[ \langle f,Pe_j\rangle = \langle Pf,e_j\rangle = \sum_{i\in I^c}b_i\langle e_i,e_j\rangle
=0,\ \ \mbox{if $j\not= i$. i.e. if $j\in I$}.\]
So $f \perp span\ \{Pe_i\}_{i\in I}$, and this family is not spanning for $P(\cH)$.

\vskip12pt
\noindent $(2)\Rightarrow (1)$:
First assume there is an $f\in P(\cH)$ so that $f \perp span\ \{Pe_i\}_{i\in I}$.
Then, $f=\sum_{i\in I}a_iPe_i$.  Also,
\[ \langle f,Pe_i\rangle = \langle Pf,e_i\rangle = \langle f,e_i\rangle =0,
\ \ \mbox{for all $i\in I$}.\]
Hence, $f=\sum_{i\in I^c}b_ie_i$, with not all $b_i=0$ and $\{b_i\}_{i\in I^c}\in \ell_2$.  Thus,
\[ \sum_{i\in I^c}b_ie_i = f = Pf = \sum_{i\in I^c}b_iPe_i.\]
i.e.
\[ \sum_{i\in I^c}b_i(I-P)e_i =0.\]
That is, $(I-P)e_i\}_{i\in I^c}$ is not $\ell_2$-independent.
\end{proof}

\begin{corollary}
The property [D6] is true for unit norm 2-tight frames
 if and only if whenever $\{\varphi_i\}_{i=1}^{\infty}$ is a
unit norm 2-tight frame then there is a partition $\{I_1,I_2\}$ of $\NN$ so that
$\{\varphi_i\}_{i\in I_j}$ is $\ell_2$-independent for $j=1,2$.
\end{corollary}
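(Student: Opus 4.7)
The plan is to combine Naimark's theorem with the preceding theorem so that each side of the ``iff'' becomes a dual assertion about the Naimark complement. For a unit norm $2$-tight frame $\Phi=\{\varphi_i\}_{i=1}^\infty$ for $\cH$, the rescaled family $\{\varphi_i/\sqrt{2}\}$ is a Parseval frame with $\|\varphi_i/\sqrt{2}\|^2=1/2$. By Naimark's theorem there exist an orthogonal projection $P$ on $\ell_2$ and an orthonormal basis $\{e_i\}_{i=1}^\infty$ of $\ell_2$ with $\varphi_i/\sqrt{2}=Pe_i$ and $\cH$ identified with $P(\ell_2)$. Since $\|Pe_i\|^2=1/2$, also $\|(I-P)e_i\|^2=1/2$, so the Naimark complement $\widetilde\Phi:=\{\sqrt{2}(I-P)e_i\}_{i=1}^\infty$ is itself a unit norm $2$-tight frame, this time for $(I-P)(\ell_2)$.

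Next, apply the preceding theorem to both halves of a partition $\{I_1,I_2\}$ of $\NN$. Since scaling frame vectors affects neither spanning nor $\ell_2$-independence, this yields the key equivalence:
\[
\{I_1,I_2\} \text{ partitions } \Phi \text{ into two spanning sets} \iff \{I_1,I_2\} \text{ partitions } \widetilde\Phi \text{ into two } \ell_2\text{-independent sets.}
\]

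Now observe that for a unit norm $2$-tight frame $\cR^-_\Phi=2$, so [D6] is exactly the statement that a partition into two spanning sets exists. Assuming [D6] holds for all unit norm $2$-tight frames, one applies it to the Naimark complement $\widetilde\Psi$ of an arbitrary such frame $\Psi$; this gives a partition of $\widetilde\Psi$ into two spanning sets which, via the key equivalence, pulls back to a partition of $\Psi$ into two $\ell_2$-independent sets. Conversely, if every unit norm $2$-tight frame admits a partition into two $\ell_2$-independent sets, apply this to $\widetilde\Psi$; the key equivalence then produces a partition of $\Psi$ into two spanning sets, verifying [D6] for $\Psi$.

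The main obstacle is ensuring the Naimark complement is genuinely an infinite dimensional unit norm $2$-tight frame, which needs $(I-P)(\ell_2)$ to be infinite dimensional so that ``partition into $\ell_2$-independent sets'' is the correct dual notion. This is routine: $\sum_i\|Pe_i\|^2=\sum_i\|(I-P)e_i\|^2=\infty$ forces both $P(\ell_2)$ and $(I-P)(\ell_2)$ to be infinite dimensional, and the $\sqrt{2}$ scaling restores unit norms and the $2$-tight bound. Once this is in place, the rest is a bookkeeping application of the preceding theorem to each half of the partition.
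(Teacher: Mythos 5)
Your proposal is correct and follows the same route the paper intends: realize a unit norm $2$-tight frame via Naimark as $\{\sqrt{2}\,Pe_i\}$ with $\|Pe_i\|^2=\tfrac12$, observe that the complement $\{\sqrt{2}\,(I-P)e_i\}$ is again a unit norm $2$-tight frame on an infinite dimensional range, and apply the preceding theorem to each cell of the partition to translate ``two disjoint spanning sets'' for one frame into ``two $\ell_2$-independent sets'' for its complement. The paper leaves these details implicit, but your filling in of the norm computation, the infinite dimensionality of both ranges, and the involutive use of the complement is exactly the intended argument.
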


If we examine the proof above, we see that what is proved is really
the following:

\begin{corollary}
Let $\{e_i\}_{i\in I}$ be an orthonormal basis for
$\cH$ and $\{Pe_i\}_{i\in I}$ be a Parseval frame for $P(\cH)$.
If  
\[ \varphi \in span\ \{e_i\}_{i\in J}\cap P(\cH),\]
then $\varphi \perp span\ \{Pe_i\}_{i\in J^c}$.

In particular,
the following are equivalent for a subset $J\subset I$:

(1) We have
\[ span\ \{e_i\}_{i\in J}\cap P(\cH) \not= \{0\},\]

(2)  We have
\[ span\ \{Pe_i\}_{i\in J^c} \not= \cH.\]

(3)  The family $\{(I-P)e_i\}|_{i\in J}$ is not $\ell_2$-independent.
\end{corollary}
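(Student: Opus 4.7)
The plan is to deduce both the main statement and the three-way equivalence from the single Parseval identity $\langle x, Pe_i\rangle = \langle Px, e_i\rangle$, which was the one computational ingredient driving the preceding theorem. I would prove the main claim first, then derive each equivalence from it in a few lines.

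For the main statement, I would take $\varphi\in\overline{\spann}\{e_i\}_{i\in J}\cap P(\cH)$ and expand it in the orthonormal basis as $\varphi=\sum_{i\in J}a_ie_i$ with $\{a_i\}\in\ell_2$. For any $j\in J^c$, I would compute
\[
\ip{\varphi}{Pe_j}=\ip{P\varphi}{e_j}=\ip{\varphi}{e_j}=a_j\cdot[j\in J]=0,
\]
using $\varphi=P\varphi$ in the middle step, which immediately yields $\varphi\perp\spann\{Pe_i\}_{i\in J^c}$.

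For (1)$\Rightarrow$(2) I would just invoke the main claim: a nonzero $\varphi$ in the intersection lies in $P(\cH)$ and is perpendicular to every $Pe_i$ with $i\in J^c$, so $\spann\{Pe_i\}_{i\in J^c}$ cannot be all of $P(\cH)$. For (2)$\Rightarrow$(1), I would pick a nonzero $\varphi\in P(\cH)$ orthogonal to the closure of $\spann\{Pe_i\}_{i\in J^c}$; expanding $\varphi=\sum_i a_ie_i$ and computing $a_j=\ip{\varphi}{e_j}=\ip{P\varphi}{e_j}=\ip{\varphi}{Pe_j}=0$ for every $j\in J^c$ forces $\varphi\in\overline{\spann}\{e_i\}_{i\in J}\cap P(\cH)\setminus\{0\}$. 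For (1)$\Leftrightarrow$(3), I would translate between the two descriptions: given $\varphi=\sum_{i\in J}a_ie_i\in P(\cH)$ nonzero, the identity $(I-P)\varphi=0$ rewrites as $\sum_{i\in J}a_i(I-P)e_i=0$ with nontrivial $\ell_2$ coefficients, and conversely such a relation produces $\varphi:=\sum_{i\in J}a_ie_i$, which equals $P\varphi$ by hypothesis and is nonzero by orthonormality of $\{e_i\}$.

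I do not anticipate a genuine obstacle, since this corollary is essentially a slightly more flexible repackaging of the proof of the preceding theorem. The only point requiring mild care is the distinction between algebraic and closed span in the infinite-dimensional setting: the natural spans appearing in the arguments are $\ell_2$-sums, and I would either phrase the statement with closed spans throughout or explicitly note that (3), formulated via $\ell_2$-independence, is precisely what makes the algebraic and closed formulations line up.
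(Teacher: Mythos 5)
Your proof is correct and follows essentially the same route as the paper: the paper gives no separate argument for this corollary, stating only that it is what the proof of the preceding theorem really establishes, and that proof rests on exactly the identity $\langle \varphi, Pe_j\rangle = \langle P\varphi, e_j\rangle = \langle \varphi, e_j\rangle$ that you use. Your closing remark about reading the spans as closed ($\ell_2$) spans is a sensible clarification that matches the paper's implicit usage.
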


\begin{remark}
Note that the above corollary unifies the finite linearly
independent result with the infinite one.  i.e.  The same theorem above
holds with $|I|$ finite and {\em linearly independent} for part (3).
\end{remark}

\begin{remark}
The above also raises the question if there is an infinite dimensional
Rado-Horn Theorem.  But we are not sure what it should say at this time.
\end{remark}

We note that property [D6] is an infinite dimensional version of a result
from \cite{CFMT}.  In this paper, using variations of the discrete
Fourier transform matrices, the authors construct families of unit norm
2-tight frames for $\cH_n$, so that whenever you partition the frame vectors into two subsets,
the lower Riesz bound of one of the subsets is on the order of $1/n$.  
For a counter-example to  [D6], we are
looking for unit norm 2-tight frames for $\ell_2$ so that whenever you divide the
frame vectors into two sets, one of them is not $\ell_2$-independent.

Finally, let us observe that we can find an equal norm Parseval frame with
the above properties.  This example is due to Bodmann, Casazza, Paulsen,
and Speegle.

\begin{example}
For any $E\subset [0,1]$ measurable, the family
\[ \left \{ e^{2\pi int}\chi_E \right \}_{n\in \ZZ},\]
can be written as $k$, $\ell_2$-independent spanning sets for all
$k=1,2,\ldots,\infty$.
\end{example}

\section{Some Notes}

It is possible that there is a better notion of redundancy than that given in
\cite{BCK}.  The problem with that notion is that if we apply 
an invertible operator
to a frame, we get different redundancy.  Intuitively, 
this should not give a different
value.  A possible alternative definition is:

\begin{definition}
Given a frame $\Phi=\{\varphi_i\}_{i=1}^N$ in $\cH_n$ with frame operator $S$, 
let
or
each $x \in \SSn$, the {\em redundancy function}
$\cR_\Phi : \SSn \to \RR^+$ is defined by
\[
\cR_\Phi(x) = \sum_{i=1}^N \|P_{\langle S^{-1/2}(\varphi_i) \rangle} (x)\|^2
%= \sum_{i=1}^N|\langle x,\frac{\varphi_i}{\|\varphi_i\|}\rangle|^2
.
\]
\end{definition}

\begin{definition} \label{def:upplowred}
Let $\Phi = (\varphi_i)_{i\in I}$ be a frame for  $\cH$.
Then the {\em upper redundancy of $\Phi$} is defined by
\[
\cR^+_\Phi = \sup_{x \in \SSn} \cR_\Phi(x)
\]
and the {\em lower redundancy of $\Phi$} by
\[
\cR^-_\Phi = \inf_{x \in \SSn} \cR_\Phi(x).
\]
Moreover, $\Phi$ has a {\em uniform redundancy}, if
\[
\cR^-_\Phi = \cR^+_\Phi.
\]
\end{definition}

This notion of redundancy equals the upper and lower frame bounds of the 
normalized version of the canonical Parseval frame to $\Phi$.

This definition seems to lose some of the properties of the original definition - which
needs to be checked - such as

[D4]  Do these hold?  Especially, if we add a Parseval frame to a frame, 
do these
redundancies increase by 1?

[D5]  Does \[
\cR^\pm_{(c_i \varphi_i)_{i=1}^N} = \cR^\pm_{\Phi}, \quad c_i 
\mbox{ scalars},
\]

Everything else seems to hold.  But we do pick up a new result that 
redundancy is
invariant under application of an invertible operator.

\begin{theorem}
If $\Phi = \{\varphi_i\}_{i=1}^N$ is a frame for $\cH_n$ and $T$ is an 
invertible
operator on $\cH_n$, then for all $x\in \cH_n$ we have
\[ \cR_{T(\Phi)}^+(x)= \cR_{\Phi}^+(x),\  \ \mbox{and}\ \  
\cR_{T(\Phi)}^-(x) = 
\cR_{\Phi}^-(x).\]
\end{theorem}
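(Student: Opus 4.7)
The plan is to reduce the claim to the unitary-invariance property [D5] by showing that the canonical Parseval frame of $T(\Phi)$ is a unitary image of the canonical Parseval frame of $\Phi$. Since the new redundancy function depends only on the one-dimensional subspaces $\langle S^{-1/2}\varphi_i\rangle$, once this unitary equivalence is in hand, the conclusion on $\cR^\pm$ is immediate. (I interpret the statement as the identity $\cR^\pm_{T(\Phi)}=\cR^\pm_\Phi$ of the sup/inf; the $(x)$ in the statement appears to be a slip, since $\cR^\pm$ does not depend on $x$.)

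First, let $S$ denote the frame operator of $\Phi$, so that the frame operator of $T(\Phi)$ is $TST^*$. Write $\psi_i:=S^{-1/2}\varphi_i$ and $\tilde\psi_i:=(TST^*)^{-1/2}T\varphi_i$ for the two canonical Parseval frames. Set $A:=TS^{1/2}$, which is invertible on $\cH_n$ because both $T$ and $S^{1/2}$ are invertible, and observe that $AA^*=TST^*$. The polar decomposition then gives $A=(AA^*)^{1/2}V$ for some unitary $V$ on $\cH_n$, so that
\[
(TST^*)^{-1/2}T = (AA^*)^{-1/2}A\, S^{-1/2} = V S^{-1/2}.
\]
Applying this to $\varphi_i$ yields $\tilde\psi_i=V\psi_i$ for every $i$.

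Next, since $P_{\langle V\psi_i\rangle}=V P_{\langle\psi_i\rangle}V^*$ and $V$ is unitary,
\[
\|P_{\langle\tilde\psi_i\rangle}(x)\|^2 = \|V P_{\langle\psi_i\rangle}V^*x\|^2 = \|P_{\langle\psi_i\rangle}(V^*x)\|^2,
\]
so summing over $i$ gives $\cR_{T(\Phi)}(x)=\cR_\Phi(V^*x)$. Because $V^*$ is a bijection of the unit sphere $\SSn$ onto itself, taking the supremum and infimum over $\SSn$ preserves the values, giving $\cR^+_{T(\Phi)}=\cR^+_\Phi$ and $\cR^-_{T(\Phi)}=\cR^-_\Phi$.

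I do not expect a substantive obstacle: the whole argument hinges on the single polar-decomposition identity $(TST^*)^{-1/2}T=VS^{-1/2}$, which is a standard finite-dimensional computation, and the already-proved unitary invariance. The only point requiring mild care is the appeal to polar decomposition to extract a genuine unitary $V$, which is unproblematic in finite dimensions thanks to the invertibility of $A=TS^{1/2}$; were one to attempt the same statement in infinite dimensions for invertible (but not necessarily isometric) $T$, one would need to argue that the partial isometry in the polar decomposition is in fact unitary, which again follows from invertibility of $A$.
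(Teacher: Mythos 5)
Your proof is correct and follows essentially the same route as the paper: both arguments reduce the claim to unitary invariance by showing that the canonical Parseval frames $S^{-1/2}\Phi$ and $(TST^*)^{-1/2}T(\Phi)$ differ by a unitary. The only difference is that the paper cites the equivalence-of-Parseval-frames result from the Casazza--Kovacevic paper to produce the unitary, whereas you construct it explicitly via the polar decomposition of $A=TS^{1/2}$, which makes your write-up self-contained.
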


\begin{proof}
Let $S$ (resp. $S_T$) be the frame operator for $\Phi$ (resp. $T(\Phi)$).  
Then
$S^{-1/2}\Phi$ is equivalent to $\Phi$ which is equivalent to $T(\Phi)$ 
which is
equivalent to $S_1^{-1/2}(T\Phi)$.  Since both of these frames are 
Parseval, it follows
from \cite{CKov} that there is a unitary operator $U$ satisfying:
\[ U[(S^{-1/2}(\Phi)] = S_1^{-1/2}(T\Phi).\]
The result is obvious from here.
\end{proof}

\noindent {\bf Alert}:  Unfortunately, this new idea for a definition
does not work.  We will now give an example to show that the upper
frame bound of the normalized version of a Parseval frame is not
related to the number of linearly independent sets we can partition
the family into.

\vskip12pt

\begin{example}  Fix $N$ and let $\{e_i\}_{i=1}^N$ be an orthonormal
basis for $\cH_N$.  We will build a Parseval frame for $\cH_N$ in
pieces.  First, we build the sets
\[ \frac{1}{\sqrt{2N}}e_1+\frac{1}{\sqrt{2N}}e_i,\ \ 
\frac{1}{\sqrt{2N}}e_1-\frac{1}{\sqrt{2N}}e_i\ \ \mbox{for all
$i=2,3,\ldots,N$}.\]
This family is a frame for $\cH_N$ with frame operator having
eigenvectors $\{e_i\}_{i=1}^N$ and respective eigenvalues
\[ \{\frac{N-1}{N},\frac{1}{N},\frac{1}{N} ,\cdots, \frac{1}{N}\}\]
Hence, if we add to this family the vectors
\[ \{\frac{1}{\sqrt{N}}e_1\} \cup \left \{ \sqrt{1-\frac{1}{N}}e_i \right \}_{i=2}^N,\]
then we will have a Parseval frame which can clearly be devided into
3 linearly independent sets.  Namely, divide the first set into
\[ \left \{\frac{1}{\sqrt{2N}}e_1+\frac{1}{\sqrt{2N}}e_i \right \}_{i=2}^N\]
and
\[ \left \{\frac{1}{\sqrt{2N}}e_1-\frac{1}{\sqrt{2N}}e_i \right \}_{i=2}^N\]
and the third set is already linearly independent.  However, if we
normalize the vectors, we get a frame:
\[ \left \{\frac{1}{\sqrt{2}}e_1 \pm \frac{1}{\sqrt{2}}e_i\right \}_{i=2}^N,\]
plus $\{e_i\}_{i=2}^N$.  For this family, if we check the
frame bound at say $e_1$ we get:
\[ \sum_{i=2}^N |\langle e_1,\frac{1}{\sqrt{2}}e_1\pm \frac{1}{\sqrt{2}}
e_i \rangle |^2 = \frac{N-1}{2}.\]
That is, the upper frame bound of the normalized version of this
Parseval frame is unrelated to the number of linearly independent
sets we can divide it into.
\end{example}

\section{Concluding Remarks}

In the case of infinite redundancy, it is possible that our upper frame bound
is infinity.  It is not clear at this time if anything can be concluded from this
case.

%************************************************************************************


\begin{thebibliography}{99}

\bibitem{BCHL03a}  R. Balan, P.G. Casazza, C. Heil and Z. Landau,
{\em Deficits and excesses of frames},
Advances in Computational Mathematics {\bf 18} No. 2-4 (2003) 93--116.

\bibitem{BCHL03}  R. Balan, P.G. Casazza, C. Heil and Z. Landau,
{\em Excesses of Gabor Frames},
Applied and Computational Harmonic Analysis {\bf 14} (2003) 87--106.


\bibitem{BCHL06}  R. Balan, P.G. Casazza, C. Heil and Z. Landau,
{\em Density, overcompleteness and localization of frames:  1.  Theory},
J. Fourier Analysis and Applications {\bf 12} (2006) 105--143.

\bibitem{BCHL06a}  R. Balan, P.G. Casazza, C. Heil and Z. Landau,
{\em Density, overcompleteness and localization of frames:  2.
Gabor Systems},
J. Fourier Analysis and Applications {\bf 12} (2006) 309--344.

\bibitem{BCHL06b}  R. Balan, P.G. Casazza, C. Heil and Z. Landau,
{\em Density, overcompleteness and localization of frames},
Electronic Research Announcements AMS {\bf 12} (2006) 71--86.



\bibitem{BCL09} R. Balan, P. G. Casazza, and Z. Landau, {\em Redundancy for localized frames},
preprint.

\bibitem{BL07} R. Balan and Z. Landau, {\em Measure functions for frames}, J. Funct. Anal.
{\bf 252} (2007), 630--676.

%\bibitem{BodPC} B. Bodmann, private communication.

\bibitem{BBCE09} R. Balan, B. G. Bodmann,
P. G. Casazza and D. Edidin, {\em Painless Reconstruction from Magnitudes of Frame Coefficients}, J. Fourier Analysis and  Applications {\bf 15} (2009),
488--501.

\bibitem{BCK}  B.G. Bodmann, P.G. Casazza and G. Kutyniok, {\em A quantative
notion of redundancy for finite frames}, preprint.

\bibitem{BCPS09}  B. G. Bodmann, P. G. Casazza, V. Paulsen, and D. Speegle, {\em Spanning
properties of frames}, preprint.

\bibitem{BDE09}
A. M. Bruckstein, D. L. Donoho, and M. Elad,
{\em From Sparse Solutions of Systems of Equations to Sparse Modeling of Signals and Images,}
SIAM Review {\bf 51} (2009), 34--81.

\bibitem{CKLV}  P.G. Casazza, O. Christensen, A.M. Lindner and R. Vershynin,
{\em Frames and the Feichtinger Conjecture}, 


\bibitem{CKL08}
P. G. Casazza, G. Kutyniok, and S. Li,
{\em Fusion Frames and Distributed Processing},
Appl. Comput. Harmon. Anal. {\bf 25} (2008), 114--132.

\bibitem{CKS06}
P. G. Casazza, G. Kutyniok, and D. Speegle, {\em A redundant version of the Rado-Horn Theorem},
Linear Algebra Appl. {\bf 418} (2006), 1--10.


\bibitem{CCHKP09} R. Calderbank, P. G. Casazza, A. Heinecke, G. Kutyniok, and A. Pezeshki,
{\em Sparse Fusion Frames: Existence and Construction}, preprint.

\bibitem{CKov}    P.G. Casazza and J. Kovacevic, {\it Uniform tight frames with erasures}, Advances in Computational Mathematics {\bf Vol.
18, Nos. 2-4} (2003) pp. 387-430.


\bibitem{CL09}  P. G. Casazza and M. Leon, {\em Existence and construction of finite frames
with a given frame operator}, preprint.

\bibitem{CT06}  P.G. Casazza and J.C. Tremain, 

\bibitem{CT09}  P. G. Casazza and J. C. Tremain, {\em A brief introduction to Hilbert-space frame theory and its applications},
preprint posted on www.framerc.org.

\bibitem{Chr03}
O.~Christensen,
\emph{An Introduction to Frames and Riesz Bases},
Birkh\"auser, Boston, 2003.

\bibitem{Dau92}
I.~Daubechies,
\emph{Ten Lectures on Wavelets}, SIAM, Philadelphia, 1992.

%\bibitem{DE03}
%D. L. Donoho and M. Elad,
%{\em Optimally sparse representation in general (nonorthogonal) dictionaries via $l\sp 1$ minimization},
%Proc. Natl. Acad. Sci. USA  {\bf 100}  (2003),  2197--2202.

\bibitem{DFKLOW}
K. Dykema, D. Freeman, K. Kornelson, D. Larson, M. Ordower, and E. Weber, {\em Ellipsoidal tight
frames and projection decompositions of operators}, Illinois J. Math. {\bf 48}  (2004), 477--489.

\bibitem{Gle57}
A. M. Gleason, {\em Measures on the closed subspaces of a Hilbert space}, J. Math. Mech. {\bf 6} (1957), 885--893.

\bibitem{Hei07}
C. Heil, {\em History and evolution of the Density Theorem for Gabor frames},
J. Fourier Anal. Appl. {\bf 13} (2007), 113--166.

%\bibitem{HP04}
%R. B. Holmes und V. I. Paulsen,
%{\em Optimal frames for erasures},
%Linear Algebra Appl.  {\bf 377}  (2004), 31--51.

\bibitem{JN35}
P. Jordan and J. von Neumann, {\em On inner products in linear metric spaces},
Annals of Math. {\bf 36} (1935), 719--723.

\bibitem{KC07a}
J. Kova{\v c}evi{\'c} and A. Chebira,
{\em Life beyond bases: The advent of frames (Part I)},
IEEE SP Mag. {\bf 24}  (2007), 86--104.

\bibitem{KC07b}
J. Kova{\v c}evi{\'c} and A. Chebira,
{\em Life beyond bases: The advent of frames (Part II)},
IEEE SP Mag. {\bf 24}  (2007), 115--125.

\bibitem{KC08}  J. Kova{\v c}evi{\'c} and A. Chebira,
{\em An Introduction to Frames}, Foundations and Trends in Signal Processing,
{\bf 2}, No. 1 (2008) 1--94.

\bibitem{Mal98}
S. Mallat,
\emph{A wavelet tour of signal processing},
Academic Press, Inc., San Diego, CA, 1998.

\bibitem{Wei80}
J. Weidmann, {\em Linear Operators in Hilbert Spaces}, Springer-Verlag, Berlin/New York, 1980.

\end{thebibliography}
\end{document}